\begin{document}
\newtheorem{theorem}{Theorem}
\newtheorem{lemma}[theorem]{Lemma}
\newtheorem{corollary}[theorem]{Corollary}
\newtheorem{definition}[theorem]{Definition}
\newtheorem{example}[theorem]{Example}
\pagenumbering{roman}
\renewcommand{\thetheorem}{\thesection.\arabic{theorem}}
\renewcommand{\thelemma}{\thesection.\arabic{lemma}}
\newenvironment{proof}{\noindent{\bf{Proof.\/}}}{\hfill$\blacksquare$\vskip0.1in}
\renewcommand{\thetable}{\thesection.\arabic{table}}
\renewcommand{\thedefinition}{\thesection.\arabic{definition}}
\renewcommand{\theexample}{\thesection.\arabic{example}}
\renewcommand{\theequation}{\thesection.\arabic{equation}}
\newcommand{\mysection}[1]{\section{#1}\setcounter{equation}{0}
\setcounter{theorem}{0} \setcounter{lemma}{0}
\setcounter{definition}{0}}
\newcommand{\mrm}{\mathrm}
\newcommand{\be}{\begin{equation}}
\newcommand{\ee}{\end{equation}}

\newcommand{\ben}{\begin{enumerate}}
\newcommand{\een}{\end{enumerate}}

\title
{\bf Exactness and Convergence Properties of Some Recent  Numerical Quadrature Formulas for Supersingular Integrals of Periodic Functions}

\author
{Avram Sidi\\
Computer Science Department\\
Technion - Israel Institute of Technology\\ Haifa 32000, Israel\\
E-mail:\quad  \url{asidi@cs.technion.ac.il}\\
URL:\quad    \url{http://www.cs.technion.ac.il/~asidi}}
\date{May 2019}
\bigskip\bigskip
\maketitle \thispagestyle{empty}
\newpage\noindent

\begin{abstract}
In a recent work, we developed three new compact numerical quadrature formulas for finite-range periodic
supersingular integrals  $I[f]=\intBar^b_a f(x)\,dx$,  where $f(x)=g(x)/(x-t)^3,$ assuming that $g\in C^\infty[a,b]$ and $f(x)$ is $T$-periodic, $T=b-a$.
With $h=T/n$, these numerical quadrature formulas  read
 \begin{align*}
 \widehat{T}{}^{(0)}_n[f]&=h\sum^{n-1}_{j=1}f(t+jh)
          -\frac{\pi^2}{3}\,g'(t)\,h^{-1}+\frac{1}{6}\,g'''(t)\,h,\\
  \widehat{T}{}^{(1)}_n[f]&=h\sum^n_{j=1}f(t+jh-h/2)
                      -\pi^2\,g'(t)\,h^{-1}, \\
 \widehat{T}{}^{(2)}_n[f]&=2h\sum^n_{j=1}f(t+jh-h/2)-
              \frac{h}{2}\sum^{2n}_{j=1}f(t+jh/2-h/4).
 \end{align*}
 We also  showed that these formulas have spectral accuracy; that is,
 $$\widehat{T}{}^{(s)}_n[f]-I[f]=O(n^{-\mu})\quad\text{as $n\to\infty$}\quad \forall \mu>0.$$
 In  the present work, we continue our study of these  formulas for the special case in which
 $f(x)=\frac{\cos\frac{\pi(x-t)}{T}}{\sin^3\frac{\pi(x-t)}{T}}\,u(x)$, where $u(x)$ is in $C^\infty(\mathbb{R})$ and is $T$-periodic. Actually, we prove that  $\widehat{T}{}^{(s)}_n[f]$,
 $s=0,1,2,$
are exact for a class of singular integrals involving $T$-periodic trigonometric polynomials of degree at most $n-1$; that is,
$$ \widehat{T}{}^{(s)}_n[f]=I[f]\quad\text{when\ \ $f(x)=\frac{\cos\frac{\pi(x-t)}{T}}{\sin^3\frac{\pi(x-t)}{T}}\,\sum^{n-1}_{m=-(n-1)}
c_m\exp(\mrm{i}2m\pi x/T)$.}$$
We also prove that, when $u(z)$ is analytic in a strip
$\big|\text{Im}\,z\big|<\sigma$ of the complex $z$-plane, the errors in all three $\widehat{T}{}^{(s)}_n[f]$ are $O(e^{-2n\pi\sigma/T})$ as $n\to\infty$,
for all practical purposes.
 \end{abstract}

\vspace{1cm} \noindent {\bf Mathematics Subject Classification
2010:} 41A55, 65B15, 65D30, 65D32.

\vspace{1cm} \noindent {\bf Keywords and expressions:} Hadamard   Finite Part,
 supersingular integrals, numerical quadrature, trapezoidal rule, periodic integrands.
\thispagestyle{empty}
\newpage
\pagenumbering{arabic}

\section{Introduction and background} \label{se1}
  \setcounter{equation}{0} \setcounter{theorem}{0}
 Let
 \be\label{eq1} I[f]=\intBar^b_a f(x)\,dx,\quad f(x)=\frac{g(x)}{(x-t)^3}, \quad g\in C^\infty,\quad t\in(a,b)\ \text{fixed}.\ee
 $\intBar^b_a f(x)\,dx$ denotes  the {\em Hadamard Finite Part (HFP)} of the {\em supersingular} integral $\int^b_af(x)\,dx$, which does not exist in the regular sense due to  the term $(x-t)^{-3}$.

 Using a generalization of the classical Euler--Maclaurin  expansion by Sidi \cite{Sidi:2012:EME-P1},\footnote{The generalization of the  Euler--Maclaurin  expansion  of  \cite{Sidi:2012:EME-P1} concerns $\intBar^b_a u(x)\,dx$, $u\in C^\infty(a,b)$, when $u(x)$ has the asymptotic expansions
 $$
u(x)\sim K(x-a)^{-1}+\sum^{\infty}_{s=0}c_s\,(x-a)^{\gamma_s}
\quad \text{as}\   x\to a+,$$
$$
u(x)\sim L(b-x)^{-1}+\sum^{\infty}_{s=0}d_s\,(b-x)^{\delta_s}
\quad \text{as}\  x\to b-,$$
$$\gamma_s\ \text{distinct and arbitrary},\quad
\gamma_s\neq -1\quad \forall  s;\quad  \text{Re\,}\gamma_0\leq
\text{Re\,}\gamma_1\leq\text{Re\,}\gamma_2\leq\cdots;\quad
\lim_{s\to\infty}\text{Re\,}\gamma_s=+\infty,$$
$$ \delta_s\ \text{distinct and arbitrary},\quad\delta_s\neq -1\quad \forall s;\quad \text{Re\,}\delta_0\leq\text{Re\,}\delta_1\leq
\text{Re\,}\delta_2\leq\cdots;\quad
\lim_{s\to\infty}\text{Re\,}\delta_s=+\infty.$$
Then, with $h=(b-a)/n$ and with Euler's constant $C=0.577\cdots$, there holds
\begin{align*}
h\sum^{n-1}_{j=1}u(a+jh)\sim\intBar^b_au(x)\,dx&+ K(C-\log h)+
\sum^{\infty}_{\substack{s=0\\ \gamma_s\not\in\{2,4,6,\ldots\}}}
c_s\,\zeta(-\gamma_s)\,h^{\gamma_s+1}\\
&+L(C-\log h)+\sum^{\infty}_{\substack{s=0\\ \delta_s\not\in\{2,4,6,\ldots\}}}
d_s\,\zeta(-\delta_s)\,h^{\delta_s+1}\quad\text{as $n\to\infty$}.
\end{align*}}
 in a recent work  by Sidi \cite{Sidi:2019:SSI-P1}, the author developed three new  trapezoidal-like
 numerical quadrature formulas $\widehat{T}{}^{(s)}_n[f],$ $s=0,1,2,$ that have excellent convergence properties for functions $f(x)$ that are such that
 \be\label{eq2}   f(x)\ \ \text{$T$-periodic}, \quad
 f\in C^\infty (\mathbb{R}_t),\quad T=b-a,\quad\mathbb{R}_t=\mathbb{R}\setminus\{t\pm kT\}^\infty_{k=0}.\ee
 With $h=T/n$, these formulas read
 \begin{align}
 \widehat{T}{}^{(0)}_n[f]&=h\sum^{n-1}_{j=1}f(t+jh)
          -\frac{\pi^2}{3}\,g'(t)\,h^{-1}+\frac{1}{6}\,g'''(t)\,h, \label{eqT0}\\
  \widehat{T}{}^{(1)}_n[f]&=h\sum^n_{j=1}f(t+jh-h/2)
                      -\pi^2\,g'(t)\,h^{-1},\label{eqT1} \\
 \widehat{T}{}^{(2)}_n[f]&=2h\sum^n_{j=1}f(t+jh-h/2)-
              \frac{h}{2}\sum^{2n}_{j=1}f(t+jh/2-h/4).\label{eqT2}
 \end{align}
Theorem 5.1 in \cite{Sidi:2019:SSI-P1} states that,
provided  $f(x)$ is as in \eqref{eq1}--\eqref{eq2},
  $\widehat{T}{}^{(s)}_n[f]\to I[f]$ as $n\to\infty$ with spectral accuracy; that is,
\be\label{eq3}
\widehat{T}{}^{(s)}_n[f]-I[f]=O(n^{-\mu})\quad \text{as $n\to\infty$} \quad \forall \mu>0.\ee

Supersingular  integrals arise in different areas of science and engineering, and several numerical quadrature formulas for computing them exist in the literature. We do not intend to review them here; instead, we refer the reader to the
bibliography of \cite{Sidi:2019:SSI-P1} for some of the related literature.

The main purpose of this work is two-fold:
(i)\,to explore the exactness properties of the quadrature formulas
$\widehat{T}{}^{(s)}_n[f]$ and (ii)\,to expand on  the convergence properties of the $\widehat{T}{}^{(s)}_n[f]$   when $f(z)$ is $T$-periodic and analytic in a strip of the complex $z$-plane that includes the real axis, with poles of order three at $x=t+kT$, $k=0,\pm1,\pm2,\ldots;$ we aim at improving and refining \eqref{eq3} considerably.

The integrands we will be working with in the sequel are of the special form
\be \label{eqKer}
f(x)=K(x)u(x),\quad K(x)\equiv\frac{\cos\frac{\pi(x-t)}{T}}{\sin^3\frac{\pi(x-t)}{T}},\quad u\in C^\infty[a,b].\ee
Such integrands arise naturally when computing Cauchy transforms on the unit circle, for example.
Throughout this work, we treat $t$ as a fixed parameter and not as a variable.

The paper is organized as follows:
In Section \ref{se2},   we provide the statements of  three  theorems that concern
the exactness properties of the quadrature formulas $\widehat{T}{}^{(s)}_n[f]$. Of these, Theorem \ref{thw1} provides the eigenvalues of  the kernel
$K(x)={\cos\frac{\pi(x-t)}{T}}/{\sin^3\frac{\pi(x-t)}{T}}$; the proof of this theorem is given Section \ref{se3}. Theorems \ref{thw2} and \ref{thw3} concern the application of the quadrature formulas $\widehat{T}{}^{(s)}_n[f]$ on the eigenfunctions of $K(x)$ and shows that they preserve  some of the eigenvalues; the proofs of these theorems are given in Section \ref{se4}.
In Section \ref{se5}, we develop the subject of the convergence of the quadrature formulas as they are applied to $T$-periodic integrands $f(x)=K(x)u(x)$ when $u(z)$  is also  analytic in a strip of the complex $z$-plane that contains the real axis. The main result of this development is stated as Theorem \ref{thw6}, whose proof is given in Section \ref{se6}.

\section{Exactness property of the $\widehat{T}{}^{(s)}_n[f]$}\label{se2}
\setcounter{equation}{0} \setcounter{theorem}{0}
It is well-known that the  trapezoidal rule for regular integrals has an interesting exactness property, as stated in Theorem \ref{th0}:

\begin{theorem} \label{th0} Let $I[f]=\int^b_a f(x)\,dx$ be a regular integral. Then the trapezoidal rule approximation for $I[f]$, namely,
 \be\label{eqtr1} Q_n[f]=h\bigg[\frac{1}{2}f(a)+\sum^{n-1}_{j=1}f(a+jh)+\frac{1}{2}f(b)\bigg];\quad h=\frac{b-a}{n},\quad n\  \text{integer},\ee
 is exact  when $f(x)$ is a trigonometric
  polynomial of degree at most $n-1$ with period $T=b-a$. That is,
 \be\label{eqtr2} Q_n[f]=I[f]\quad \forall f(x)=\sum^{n-1}_{m=-(n-1)}c_me^{\mrm{i}\,2m\pi x/T}.\ee
\end{theorem}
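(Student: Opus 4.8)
The plan is to reduce the statement to the exponential basis and then invoke the discrete orthogonality of the $n$th roots of unity. First I would observe that every $f(x)=\sum_{m=-(n-1)}^{n-1}c_m e^{\mrm{i}\,2m\pi x/T}$ is $T$-periodic, so $f(a)=f(b)$ and the two boundary terms in \eqref{eqtr1} coalesce into a single full-weight node; with $h=T/n$ this gives $Q_n[f]=h\sum_{j=0}^{n-1}f(a+jh)$. Since both $I[\cdot]$ and $Q_n[\cdot]$ are linear, it suffices to verify $Q_n[e_m]=I[e_m]$ for each basis function $e_m(x)=e^{\mrm{i}\,2m\pi x/T}$ with $|m|\le n-1$.

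Next I would evaluate the two sides separately. On the analytic side, $I[e_0]=\int_a^b 1\,dx=T$, while for $m\neq0$ a direct antiderivative gives $I[e_m]=\frac{T}{\mrm{i}\,2m\pi}\big(e^{\mrm{i}\,2m\pi b/T}-e^{\mrm{i}\,2m\pi a/T}\big)=0$, because $e^{\mrm{i}\,2m\pi b/T}=e^{\mrm{i}\,2m\pi a/T}\,e^{\mrm{i}\,2m\pi(b-a)/T}=e^{\mrm{i}\,2m\pi a/T}$ using $T=b-a$. On the quadrature side, writing $h/T=1/n$ and $\omega=e^{\mrm{i}\,2\pi/n}$,
\[
Q_n[e_m]=h\sum_{j=0}^{n-1}e^{\mrm{i}\,2m\pi(a+jh)/T}=h\,e^{\mrm{i}\,2m\pi a/T}\sum_{j=0}^{n-1}\omega^{jm}.
\]

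Then I would apply the elementary geometric-sum identity: $\sum_{j=0}^{n-1}\omega^{jm}$ equals $n$ when $n\mid m$, and equals $(\omega^{nm}-1)/(\omega^{m}-1)=0$ otherwise, since $\omega^m\neq1$ precisely when $m\not\equiv0\pmod n$. For $|m|\le n-1$ the congruence $m\equiv0\pmod n$ holds only for $m=0$; hence $Q_n[e_m]=hn\,\delta_{m0}=T\,\delta_{m0}=I[e_m]$ for every admissible $m$, and the theorem follows by linearity.

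There is no substantive obstacle here — the result is classical — but the one point deserving attention is the sharpness of the degree bound, which is exactly where the hypothesis $|m|\le n-1$ is used. For $m=\pm n$ the sampled values $\omega^{jm}$ alias onto the constant $1$, so $Q_n[e_{\pm n}]=T$ whereas $I[e_{\pm n}]=0$; the argument above avoids this solely by excluding $m\equiv0\pmod n$ with $m\neq 0$. Thus the proof rests on just two ingredients: the periodicity of $f$, which collapses the trapezoidal endpoint weights, and the orthogonality of the $n$th roots of unity, which kills every nonconstant sampled harmonic of degree below $n$.
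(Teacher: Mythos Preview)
Your proof is correct and complete. The paper itself does not supply a proof of this theorem; it introduces the result as ``well-known'' and moves on to the new exactness results for the $\widehat{T}{}^{(s)}_n[f]$. The argument you give---collapsing the endpoint weights via $T$-periodicity and then invoking the geometric-sum identity $\sum_{j=0}^{n-1}\omega^{jm}=n\,\delta_{m\equiv 0\,(\mathrm{mod}\,n)}$---is the standard classical proof, and your remark on aliasing at $m=\pm n$ correctly identifies why the degree bound is sharp.
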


We showed  in Sidi \cite[Theorems 5.1 and 10.1]{Sidi:2013:CNQ} that the numerical quadrature formulas developed there for periodic  {\em Cauchy Principal Value} integrals and  {\em hypersingular} integrals, when applied to $\intbar^b_a \cot\frac{\pi(x-t)}{T} u(x)\,dx$ and
 $\intBar^b_a \csc^2\frac{\pi(x-t)}{T} u(x)\,dx$, respectively,   also enjoy  interesting exactness properties when $u(x)$ is a trigonometric polynomial of period $T=b-a$.

Here we  show that each of the numerical quadrature formulas  $\widehat{T}{}^{(s)}_n[f]$ given in \eqref{eqT0}--\eqref{eqT2}  for  supersingular integrals $I[f]$ with $T$-periodic $f(x)$,
enjoy  similar exactness properties as described in Theorems \ref{thw1} and \ref{thw2}, which form two of the main results of this work.

\begin{theorem} \label{thw1}
With $T=b-a$, let
\be\label{eqIfmc} e_m(x)=e^{\mrm{i}2m\pi x/T},\quad f_m(x)=K(x)e_m(x),\quad m=0,\pm1,\pm2,\ldots.\ee
Then the  supersingular integral $I[f_m]=\intBar^b_af_m(x)\,dx$ satisfies
\be\label{eqIfm} I[f_m]=-\mrm{i}\,\text{\em sgn}(m)2Tm^2e_m(t),
\quad m=0,\pm1,\pm2,\ldots.
\ee
{\em [}Note that $f_m(x)$ is $T$-periodic and has a supersingularity of the  form $(x-t)^{-3}$ at $x=t$.{\em ]}
\end{theorem}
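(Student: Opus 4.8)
The plan is to compute the Hadamard Finite Part $I[f_m]=\intBar^b_a K(x)e_m(x)\,dx$ directly. First I would reduce to the case $t=0$ and $[a,b]=[0,T]$ (equivalently, normalize to period $2\pi$), since the integrand is $T$-periodic and only the combination $x-t$ appears in $K$; this replaces $e_m(t)$ by the explicit phase factor we carry along at the end. So I must evaluate $J_m=\intBar^T_0 \dfrac{\cos(\pi x/T)}{\sin^3(\pi x/T)}\,e^{\mathrm{i}2m\pi x/T}\,dx$ and then multiply by $e_m(t)$, using translation invariance of the HFP.

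The key observation is that $\dfrac{\cos\theta}{\sin^3\theta}$ is (up to a constant) a derivative: $\dfrac{d}{d\theta}\big(-\tfrac12\csc^2\theta\big)=\dfrac{\cos\theta}{\sin^3\theta}$. Writing $\phi=\pi x/T$ so $d\phi=(\pi/T)\,dx$, I would integrate by parts twice (in the Hadamard Finite Part sense — which is legitimate because the HFP obeys the usual integration-by-parts rule, with the singular boundary terms at $x=t$ being discarded by definition), moving derivatives off $\csc^2$-type factors and onto the smooth exponential $e_m(x)$. Alternatively, and perhaps more cleanly, I would invoke the known value of the periodic hypersingular integral $\intBar^b_a \csc^2\frac{\pi(x-t)}{T}\,e_m(x)\,dx$ (this is exactly the kernel treated in \cite{Sidi:2013:CNQ}, whose eigenvalue is $-\tfrac{2\pi|m|}{?}$-type; more precisely $\intBar^b_a\csc^2\frac{\pi(x-t)}{T}e_m(x)\,dx = -2\pi|m|\,e_m(t)$ up to the normalization), differentiate the identity $\frac{d}{dt}$ through the parameter, and use $\frac{d}{dt}\csc^2\frac{\pi(x-t)}{T}=\frac{2\pi}{T}\,\frac{\cos\frac{\pi(x-t)}{T}}{\sin^3\frac{\pi(x-t)}{T}}$ to recognize $K$. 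Differentiating $-2\pi|m|\,e^{\mathrm{i}2m\pi t/T}$ in $t$ brings down $\mathrm{i}2m\pi/T$, and after collecting the $\frac{T}{2\pi}$ from inverting the chain-rule factor one lands on $I[f_m]=-\mathrm{i}\,2Tm^2\,\mathrm{sgn}(m)\,e_m(t)$, matching \eqref{eqIfm}; the case $m=0$ is immediate since then $f_0=K$ is odd about $t$ and its HFP vanishes.

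The main obstacle is justifying the manipulations with the Hadamard Finite Part rigorously: differentiation under the $\intBar$ sign with respect to the parameter $t$, and the integration-by-parts rule for supersingular integrands, both require care because the finite-part regularization interacts with the singularity. I would handle this either by citing the relevant lemmas from \cite{Sidi:2012:EME-P1} and \cite{Sidi:2019:SSI-P1} that establish these rules, or by going back to the definition: split the integral as $\int_{|x-t|>\varepsilon}$ plus the subtracted polar part $\frac{g(t)}{(x-t)^3}+\frac{g'(t)}{(x-t)^2}+\frac{g''(t)/2}{x-t}$ with $g(x)=\cos\frac{\pi(x-t)}{T}\,e_m(x)\,\big/\,$(the analytic unit at $t$), and track the $\varepsilon\to0$ limit explicitly. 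A second, smaller obstacle is bookkeeping the $\mathrm{sgn}(m)$ and the factor of $2$: I expect the cleanest route is actually to compute $J_m$ by residues, writing $e^{\mathrm{i}2m\pi x/T}$ and $\csc$-type kernels in terms of $q=e^{\mathrm{i}2\pi x/T}$ and summing a geometric-type series, which automatically produces the $m^2$ and the sign split according to whether the contour is closed in the upper or lower half-plane; this also transparently gives $0$ at $m=0$.
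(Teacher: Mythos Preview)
Your approach is correct in outline and genuinely different from the paper's. The paper does not use the derivative relation $\partial_t\csc^2\frac{\pi(x-t)}{T}=\frac{2\pi}{T}K(x)$ at all; instead it changes variables to reduce $I[f_m]$ to $A_m=\intBar^{\pi}_{-\pi}\frac{\cos(y/2)}{\sin^3(y/2)}e^{\mrm{i}my}\,dy$, observes $A_0=0$ and $A_{-m}=-A_m$ by parity, and then derives a second-order linear recursion $A_{m+1}-2A_m+A_{m-1}=-8\pi\mrm{i}$ from the trigonometric identity $\sin[(m+1)y]+\sin[(m-1)y]=2(1-2\sin^2\tfrac{y}{2})\sin(my)$ together with the known integral $\int_{-\pi}^{\pi}\cot(y/2)\sin(my)\,dy=2\pi$. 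Solving this recursion with the initial data $A_0=0$, $A_1=-4\pi\mrm{i}$ gives $A_m=-\mrm{i}\,4\pi m^2$ for $m\geq 0$, and the result follows.

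Your route---bootstrapping from the hypersingular eigenvalue $\intBar^b_a\csc^2\frac{\pi(x-t)}{T}e_m(x)\,dx=-2T|m|\,e_m(t)$ by differentiating in $t$---is shorter and more conceptual, and it explains \emph{why} the eigenvalue jumps from $|m|$ to $m^2$: each $t$-derivative of the kernel buys one more factor of $m$ from $e_m(t)$. The price is exactly the obstacle you name: differentiation under the Hadamard finite part in the pole location $t$ is not automatic (the excised interval $|x-t|>\varepsilon$ moves with $t$, producing singular boundary contributions that must be reabsorbed), and neither \cite{Sidi:2012:EME-P1} nor \cite{Sidi:2019:SSI-P1} supplies a ready-made lemma for it. The paper's recursion argument sidesteps this entirely at the cost of being less illuminating; it is fully self-contained and uses only one elementary tabulated integral. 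Your residue/contour alternative would also work and is closest in spirit to the paper's method, since both ultimately rely on explicit evaluation rather than differentiation of a prior identity.
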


\noindent{\bf Remark:} Observe  that this theorem  actually states that $e_m(x)$ are actually  eigenfunctions of the kernel
$K(x)={\cos\frac{\pi(x-t)}{T}}/{\sin^3\frac{\pi(x-t)}{T}}$  with corresponding eigenvalues $-\mrm{i}\,\text{sgn}(m)2Tm^2$, $m=0,\pm1,\pm2,\ldots.$

\begin{theorem} \label{thw2}
With $e_m(x)$ and $f_m(x)$ as in Theorem \ref{thw1},
the quadrature formulas $\widehat{T}{}^{(s)}_n[f_m]$  satisfy the following:

\begin{align} \label{eqTn8} \widehat{T}{}^{(0)}_n[f_m]&=\mrm{i}\,\frac{T}{n}\,
\bigg( B_{m,n}-\frac{2}{3}mn^2-\frac{4}{3}m^3\bigg)\,e_m(t), \\
\widehat{T}{}^{(1)}_n[f_m]&=\mrm{i}\,\frac{T}{n}\,
\big( [B_{m,2n}-B_{m,n}]-2mn^2\big)\,e_m(t), \label{eqTn9}\\
\widehat{T}{}^{(2)}_n[f_m]&=\mrm{i}\,\frac{T}{n}\,
\bigg(2[B_{m,2n}-B_{m,n}]-\frac{1}{2}[B_{m,4n}-B_{m,2n}]\bigg)\,e_m(t), \label{eqTn10}
\end{align}
where $B_{m,n}$ are defined as follows:
\begin{enumerate}
\item For $m=0$, we have $ B_{0,n}=0$. For arbitrary $m$, there holds $B_{-m,n}=-B_{m,n}.$
\item
Given $m\geq0$,  let $k$ and $r$ be (unique) integers, $k\geq0$ and  $0\leq r\leq n-1$, such that $m=kn+r$. Then
\be\label{eqTn7}   B_{m,n}=B_{kn+r,n}=B_{r,n}=\frac{2}{3}\,rn^2-2r^2n+\frac{4}{3}\,r^3.\ee
\item Thus, $\big|B_{m,n}\big|\leq \max_{0\leq i\leq n-1}\big|B_{i,n}\big|$
independent of  $m$, hence $\{B_{m,n}\}^\infty_{m=-\infty}$ is a bounded sequence for each fixed $n$.
\end{enumerate}
\end{theorem}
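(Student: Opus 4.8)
The plan is to reduce the entire statement to formula \eqref{eqTn8} and, within it, to a closed-form evaluation of a single trigonometric sum. Write $h=T/n$ and $g_m(x)=(x-t)^3f_m(x)$, and begin with the two correction terms in \eqref{eqT0}. Putting $w=\pi(x-t)/T$ and using $e_m(x)=e_m(t)e^{\mrm{i}2mw}$ gives $g_m(x)=(T/\pi)^3e_m(t)\psi_m(w)$ with $\psi_m(w)=\phi(w)e^{\mrm{i}2mw}$ and $\phi(w)=(w/\sin w)^3\cos w$. From the Laurent expansion $\csc^2w=w^{-2}+\tfrac13+\tfrac1{15}w^2+\cdots$ one gets $\phi(w)=1-\tfrac1{15}w^4+O(w^6)$, hence $\phi'(0)=\phi''(0)=\phi'''(0)=0$, so $\psi_m'(0)=\mrm{i}2m$ and $\psi_m'''(0)=(\mrm{i}2m)^3=-\mrm{i}8m^3$. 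Converting $w$-derivatives at $w=0$ into $x$-derivatives at $x=t$ (each contributing a factor $\pi/T$) yields $g_m'(t)=\tfrac{\mrm{i}2mT^2}{\pi^2}e_m(t)$ and $g_m'''(t)=-\mrm{i}8m^3e_m(t)$; inserting these into \eqref{eqT0}, the two correction terms combine to exactly $\mrm{i}\tfrac Tn(-\tfrac23mn^2-\tfrac43m^3)e_m(t)$, i.e.\ to all of \eqref{eqTn8} except the $B_{m,n}$ term.

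Next I would handle the node sum. Since $K(t+jh)=\cos(\pi j/n)/\sin^3(\pi j/n)$ and $e_m(t+jh)=e_m(t)e^{\mrm{i}2\pi mj/n}$, the rest of \eqref{eqT0} equals $\tfrac Tne_m(t)\,\Sigma_{m,n}$ with $\Sigma_{m,n}:=\sum_{j=1}^{n-1}\frac{\cos(\pi j/n)}{\sin^3(\pi j/n)}e^{\mrm{i}2\pi mj/n}$, so that \eqref{eqTn8} is equivalent to the identity $\Sigma_{m,n}=\mrm{i}B_{m,n}$. Two elementary remarks reduce the work: $e^{\mrm{i}2\pi(m+n)j/n}=e^{\mrm{i}2\pi mj/n}$ makes $\Sigma_{m,n}$ $n$-periodic in $m$ (item 2), while replacing $j$ by $n-j$ (which flips the sign of $\cos$ and leaves $\sin$ and $e^{\mrm{i}2\pi mj/n}$ unchanged) gives $\Sigma_{0,n}=0$ and $\Sigma_{-m,n}=-\Sigma_{m,n}$ (item 1, which also shows the two displayed formulas for $B_{m,n}$ are mutually consistent). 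Hence it suffices to prove $\Sigma_{r,n}=\mrm{i}(\tfrac23rn^2-2r^2n+\tfrac43r^3)$ for $1\le r\le n-1$.

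The heart of the matter is this evaluation, and I would obtain it from the second central difference in $m$: since $e^{\mrm{i}2\theta}-2+e^{-\mrm{i}2\theta}=-4\sin^2\theta$, one has $\Sigma_{m+1,n}-2\Sigma_{m,n}+\Sigma_{m-1,n}=-4\sum_{j=1}^{n-1}\cot(\pi j/n)e^{\mrm{i}2\pi mj/n}=:-4\Gamma_{m,n}$, and $\Gamma_{m,n}$ is precisely a Cauchy-principal-value sum of the kind evaluated in \cite{Sidi:2013:CNQ}; writing $\cot\theta=\mrm{i}(\zeta+1)/(\zeta-1)$ with $\zeta=e^{\mrm{i}2\theta}$ and $(\zeta+1)/(\zeta-1)=1+2/(\zeta-1)$, the classical identities $\sum_{\zeta^n=1,\,\zeta\neq1}\zeta^k=-1$ (for $1\le k\le n-1$) and $\sum_{\zeta^n=1,\,\zeta\neq1}(\zeta-1)^{-1}=-\tfrac{n-1}2$ give $\Gamma_{m,n}=\mrm{i}(n-2m)$ for $1\le m\le n-1$ and $\Gamma_{0,n}=0$. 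Setting $\Psi_{m,n}:=\mrm{i}B_{m,n}$, a direct computation shows the cubic $B_{m,n}=\tfrac43m^3-2nm^2+\tfrac23n^2m$ has second difference $8m-4n$, so $\Psi_{m,n}$ obeys the \emph{same} recursion for $1\le m\le n-1$ and the same symmetries ($n$-periodicity, oddness, value $0$ at $m=0$); hence $\Sigma_{m,n}-\Psi_{m,n}$ is $n$-periodic with vanishing second difference everywhere, therefore affine in $m$, therefore constant, and it vanishes at $m=0$. This gives $\Sigma_{m,n}=\mrm{i}B_{m,n}$ and hence \eqref{eqTn8}. The point demanding care is exactly this last step — verifying that periodicity and oddness really determine the solution uniquely; a self-contained alternative is to compute $\Sigma_{m,n}$ outright, by subtracting from $(\zeta+1)\zeta^{m+1}/(\zeta-1)^3$ its principal part $\tfrac2{(\zeta-1)^3}+\tfrac{2m+3}{(\zeta-1)^2}+\tfrac{(m+1)^2}{\zeta-1}$ at $\zeta=1$, applying the roots-of-unity filter to the polynomial remainder, and inserting the elementary closed forms of $\sum_{\zeta\neq1}(\zeta-1)^{-p}$ for $p=1,2,3$.

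Finally, \eqref{eqTn9}, \eqref{eqTn10}, and item 3 follow with no new ideas. Splitting the $(2n)$-point node sum underlying $\widehat{T}{}^{(0)}_{2n}$ according to the parity of the summation index shows it equals the $n$-point node sum of $\widehat{T}{}^{(0)}_n$ plus the $n$-point midpoint sum occurring in $\widehat{T}{}^{(1)}_n$; hence $h\sum_{j=1}^nf_m(t+jh-h/2)=\tfrac Tne_m(t)(\Sigma_{m,2n}-\Sigma_{m,n})=\mrm{i}\tfrac Tn(B_{m,2n}-B_{m,n})e_m(t)$, and adding the correction $-\pi^2g_m'(t)h^{-1}=\mrm{i}\tfrac Tn(-2mn^2)e_m(t)$ yields \eqref{eqTn9}. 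In \eqref{eqT2} the first sum is twice this $n$-point midpoint sum, and the second is the analogous $(2n)$-point midpoint sum, equal by the same splitting to $\tfrac T{2n}e_m(t)(\Sigma_{m,4n}-\Sigma_{m,2n})=\mrm{i}\tfrac T{2n}(B_{m,4n}-B_{m,2n})e_m(t)$; subtracting gives \eqref{eqTn10}, which carries no correction term. Item 3 is immediate: for fixed $n$, items 1–2 show $\{B_{m,n}\}_m$ takes only the finitely many values $0,\pm B_{1,n},\dots,\pm B_{n-1,n}$, so it is bounded.
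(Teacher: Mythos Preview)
Your proof is correct and follows the paper's overall architecture: compute the correction terms from the Taylor expansion of $g_m$ at $t$, reduce the node sum to a second-difference recursion in $m$, and solve. The substantive difference lies in how you pin down the two free constants in the general solution $B_{m,n}=\alpha+\beta m-2nm^2+\tfrac43m^3$. The paper computes $B_{1,n}$ explicitly by evaluating $L_n=\sum_{j=1}^{n-1}\csc^2(\pi j/n)$ through the Chebyshev identity $L_n=T_n''(1)/T_n'(1)=(n^2-1)/3$, and then uses $B_{0,n}$ and $B_{1,n}$ as initial data; you instead exploit that both $\Sigma_{m,n}$ and the target $\Psi_{m,n}$ are $n$-periodic, forcing their (affine) difference to be constant and hence zero. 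Your route is more self-contained---no external reference needed---though the step you flag as ``demanding care'' is real: you must check that the periodically extended cubic actually satisfies the second-difference identity across the seam at $m=0$, equivalently that $B_{n-1,n}=-B_{1,n}$, which the factorization $B_{m,n}=\tfrac23 m(m-n)(2m-n)$ makes transparent. You also evaluate the cotangent sum $\Gamma_{m,n}$ directly via roots-of-unity identities, whereas the paper introduces the auxiliary sums $D_{k,n}$ and runs a separate recursion to reach $C_{m,n}=n-2m$; both are short, and yours avoids the extra named quantity.

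One small slip: under $j\mapsto n-j$ the exponential $e^{\mrm{i}2\pi mj/n}$ is \emph{not} unchanged but becomes $e^{-\mrm{i}2\pi mj/n}$. Your conclusions $\Sigma_{0,n}=0$ and $\Sigma_{-m,n}=-\Sigma_{m,n}$ are still correct (the first uses $m=0$, the second uses that the sign flip of the kernel converts $\Sigma_{-m,n}$ into $-\Sigma_{m,n}$ after the substitution), but the parenthetical justification needs that correction.
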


\begin{theorem}\label{thw3} All three  quadrature formulas
 $\widehat{T}{}^{(s)}_n[f]$ possess the exactness property that
\be \label{eqTn8a} \widehat{T}{}^{(s)}_n[f_m]=I[f_m],\quad m=0,\pm1,\ldots,\pm(n-1),\ee
hence  that
\be\label{eqTn7a} \widehat{T}{}^{(s)}_n[f]=I[f],\quad
f(x)= \frac{\cos\frac{\pi(x-t)}{T}}{\sin^3\frac{\pi(x-t)}{T}}\, u(x)\quad \forall u(x)=\sum^{n-1}_{m=-(n-1)}c_me^{\mrm{i}\,2m\pi x/T}. \ee
\end{theorem}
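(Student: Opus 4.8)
The assertion is a consequence of Theorems \ref{thw1} and \ref{thw2}: once $\widehat{T}{}^{(s)}_n[f_m]$ and $I[f_m]$ are available in closed form, \eqref{eqTn8a} is a finite algebraic verification, and \eqref{eqTn7a} then follows by linearity. My plan is (i) to specialize Theorem \ref{thw2} to the range $|m|\le n-1$, (ii) to check that in this range each $\widehat{T}{}^{(s)}_n[f_m]$ coincides with the eigenvalue expression \eqref{eqIfm}, and (iii) to pass from $f_m$ to $f=Ku$ using the linearity of the functionals $I$ and $\widehat{T}{}^{(s)}_n$.

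The one structural point is that the ``wrap-around'' integer $k$ in item~2 of Theorem \ref{thw2} vanishes exactly when the index is smaller than the modulus. Thus for $0\le m\le n-1$ the value $B_{m,n}$ is given by the cubic $\frac23 mn^2-2m^2n+\frac43 m^3$; and because in that case also $m\le 2n-1$ and $m\le 4n-1$, the corresponding cubics (with $n$ replaced by $2n$ and by $4n$) give $B_{m,2n}$ and $B_{m,4n}$. Substituting these into \eqref{eqTn8}, \eqref{eqTn9}, \eqref{eqTn10}, a short computation shows that in every case all terms cancel except a single $-2m^2n$ contribution, leaving
\be\label{eqpp} \widehat{T}{}^{(s)}_n[f_m]=-\mrm{i}\,2Tm^2\,e_m(t),\qquad s=0,1,2,\quad 1\le m\le n-1,\ee
while for $m=0$ all the $B$'s vanish and $\widehat{T}{}^{(s)}_n[f_0]=0$. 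By \eqref{eqIfm} this is precisely $I[f_m]$ for $0\le m\le n-1$.

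For $-(n-1)\le m\le -1$ I would invoke symmetry rather than recompute. Writing $\widehat{T}{}^{(s)}_n[f_m]=\mrm{i}\,\lambda^{(s)}_m\,e_m(t)$, each $\lambda^{(s)}_m$ is an odd function of the integer $m$: it is assembled from $B_{m,n},B_{m,2n},B_{m,4n}$, all odd in $m$ by item~1, together with the odd polynomials $\frac23 mn^2+\frac43 m^3$ and $2mn^2$. The eigenvalue in \eqref{eqIfm}, namely $-\mrm{i}\,\text{sgn}(m)\,2Tm^2$, is likewise odd in $m$. Two odd sequences that agree for $m=0,1,\dots,n-1$ agree for all $m$ with $|m|\le n-1$, which gives \eqref{eqTn8a}. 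Finally, since $f(x)=K(x)u(x)=\sum_{m=-(n-1)}^{n-1}c_m f_m(x)$, multiplying \eqref{eqTn8a} by $c_m$ and summing over $m$ yields \eqref{eqTn7a}.

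I do not anticipate a genuine obstacle; the proof is bookkeeping on top of Theorems \ref{thw1} and \ref{thw2}. The only point deserving care — and the reason the exactness range is ``degree at most $n-1$'' — is that for $|m|\ge n$ the integer $k$ is positive, the polynomial form of $B_{m,\cdot}$ acquires extra terms, and the cancellations leading to \eqref{eqpp} break down; as with the ordinary trapezoidal rule of Theorem \ref{th0}, exactness is lost precisely once the frequency index reaches $n$.
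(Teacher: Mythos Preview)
Your proposal is correct and follows essentially the same route as the paper: substitute the cubic expression for $B_{m,n}$ (and its $2n$, $4n$ analogues) into \eqref{eqTn8}--\eqref{eqTn10} and compare with \eqref{eqIfm}, then extend to negative $m$ by oddness and to general $u$ by linearity. In fact the paper's own proof merely states ``substituting \eqref{eqmm2} in \eqref{eqTn8}--\eqref{eqTn10} and comparing with \eqref{eqIfm}; we leave the details to the reader,'' so your version is a faithful expansion of exactly that sketch.
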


We provide the proofs of these theorems in the next two sections.

\section{Proof of Theorem \ref{thw1}} \label{se3}
\setcounter{equation}{0} \setcounter{theorem}{0}

We start by noting that
 $$ I[f_m]=\bigg[\intBar^b_a\frac{\cos\frac{\pi(x-t)}{T}}
 {\sin^3\frac{\pi(x-t)}{T}}\,e^{\mrm{i}\,2m\pi(x- t)/T}dx\bigg]\,
e^{\mrm{i}\,2m\pi t/T}.$$
Making  the variable transformation $y=2\pi(x-t)/T$ in the integral inside the square brackets,  and using the fact that the transformed integrand is $2\pi$-periodic, we obtain
\be \label{eq510}I[f_m]=\frac{T}{2\pi}A_{m} e^{\mrm{i}\,2m\pi t/T}, \quad A_{m}=\intBar^{\pi}_{-\pi}\frac{\cos(\tfrac{1}{2}y)}{\sin^3(\tfrac{1}{2}y)}\,e^{\mrm{i}my}\,dy.\ee

Next,
$$A_{m}=\intBar^{\pi}_{-\pi}\frac{\cos(\tfrac{1}{2}y)}{\sin^3(\tfrac{1}{2}y)}\,\cos(my)\,dy+
\mrm{i} \intBar^{\pi}_{-\pi}\frac{\cos(\tfrac{1}{2}y)}{\sin^3(\tfrac{1}{2}y)}\,\sin(my)\,dy,$$ and
since $\intBar^{\pi}_{-\pi}[{\cos(\tfrac{1}{2}y)}/{\sin^3(\tfrac{1}{2}y)}]\,\cos(my)\,dy=0$  due to its integrand being odd, it follows that
\be \label{eq511}A_{m}=\mrm{i} \intBar^{\pi}_{-\pi}\frac{\cos(\tfrac{1}{2}y)}{\sin^3(\tfrac{1}{2}y)}\,\sin(my)\,dy
\quad \Rightarrow\quad A_0=0,\quad A_{-m}=-A_m.\ee
Therefore, it is sufficient to study $A_m$ only for nonnegative $m$, and this is what we do in the sequel.
Now,
\begin{align*} \sin[(m+1)y]+ \sin[(m-1)y]&=2 \sin(my)\cos y \notag\\
&=2[1-2\sin^2(\tfrac{1}{2}y)]\sin(my),\end{align*}
from which, by multiplying  by $\cos(\tfrac{1}{2}y)/\sin^3(\tfrac{1}{2}y)$, we obtain the identity

\be\label{eq513}\frac{\cos(\tfrac{1}{2}y)}{\sin^3(\tfrac{1}{2}y)}
\big(\sin[(m+1)y]- 2 \sin(my)+ \sin[(m-1)y]\big)=
-4\cot(\tfrac{1}{2}y)\,\sin(my). \ee
Upon integrating  both sides of   this identity over $(-\pi,\pi)$ and invoking \eqref{eq511}, we obtain
\be\label{eqLmn11} A_{m+1}-2A_{m}+A_{m-1}=-4\mrm{i}\int^\pi_{-\pi}\cot(\tfrac{1}{2}y)\sin(my)\,dy.\ee
[Note that the integral on the right-hand side of \eqref{eqLmn11} is defined in the regular sense.]
 By making the variable transformation $y=2z$ in this integral, and  invoking Gradshteyn and Ryzhik \cite[p. 391, formula 3.612(7)]{Gradshteyn:2007:TIS},  we obtain

$$ \int^\pi_{-\pi}\cot(\tfrac{1}{2}y)\sin(my)\,dy=4\int^{\pi/2}_0{\cos z}\,\frac{\sin(2mz)}{\sin z}\,dz=2\pi, \quad  m=1,2,\ldots .$$
Substituting this in \eqref{eqLmn11}, we obtain the following  recursion relation for the $A_m$:
$$ A_{m+1}-2A_{m}+A_{m-1}=-8\pi\mrm{i},\quad m=1,2,\ldots.$$
It is easy to see that the general solution of this recursion relation for $A_m$ is of the form
$$A_m=\alpha + \beta m-\mrm{i}4\pi m^2,\quad \text{ $\alpha$ and $\beta$   constants to be determined.}$$
First,
$A_0=0$ by \eqref{eq511}; this gives $\alpha=0$. Next, letting $m=1$ in the integral representation of $A_m$ in \eqref{eq511}, and simplifying the integrand, we obtain
$$A_1=\mrm{i} \intBar^{\pi}_{-\pi}\frac{\cos(\tfrac{1}{2}y)}{\sin^3(\tfrac{1}{2}y)}\,\sin y\,dy=
2\mrm{i} \intBar^{\pi}_{-\pi}\bigg[\frac{1}{\sin^2(\tfrac{1}{2}y)}-1\bigg]\,dy,$$ which,
 by the fact that (see \cite[Appendix A]{Sidi:2013:CNQ}, for example) $\intBar^{\pi}_{-\pi}{\csc^2(\tfrac{1}{2}y)}\,dy=0,$ gives $A_1=-4\pi\mrm{i},$ which in turn  implies $\beta=0$. Consequently, taking into account that $A_{-m}=-A_m$, we have
$$ A_m=-\mrm{i}\,\text{sgn}(m)\,4\pi m^2, \quad m=0,\pm1,\pm2,\ldots,$$
which, upon substituting into \eqref{eq510}, gives \eqref{eqIfm}.

\section{Proofs of Theorems  \ref{thw2}  and \ref{thw3}} \label{se4}
 \setcounter{equation}{0} \setcounter{theorem}{0}
\subsection{Preliminaries}
By the fact that $g(x)=(x-t)^3f(x)$ in \eqref{eq1}, we  realize that we must first address the issue of determining $g(x)$ and its  first three derivatives at $x=t$  when  $f(x)$ is of the form,
\be\label{eqpqs}f(x)= \frac{\cos\frac{\pi(x-t)}{T}}{\sin^3\frac{\pi(x-t)}{T}}\, u(x)
 \quad \Rightarrow\quad  g(x)=(x-t)^3\frac{\cos\frac{\pi(x-t)}{T}}{\sin^3\frac{\pi(x-t)}{T}}\, u(x).\ee
We achieve this by expanding $g(x)$ in a Taylor series about $x=t$.
We start by realizing that
$$ \frac{\cos z}{\sin^3 z}= \frac{1}{z^3}\frac{1-\tfrac{1}{2}z^2+O(z^4)}{1-\tfrac{1}{2}z^2+O(z^4)}=
\frac{1}{z^3}\big[1+O(z^4)\big]\quad\text{as $z\to0$.}$$
Using this in \eqref{eqpqs} and also expanding  $u(x)$ about $x=t$, we obtain
$$ g(x)=\bigg(\frac{T}{\pi}\bigg)^3\bigg[\sum^3_{i=0}\frac{u^{(i)}(t)}{i!}(x-t)^i
+O\big( (x-t)^4\big)\bigg]\quad \text{as $x\to t$,}$$
which implies that
\be\label{eqrty} g^{(i)}(t)=\bigg(\frac{T}{\pi}\bigg)^3 u^{(i)}(t),\quad i=0,1,2,3.\ee

Next, letting
\be \label{ttilde} \widetilde{T}_n[f]=h\sum^{n-1}_{j=1}f(t+jh),\ee
we rewrite \eqref{eqT0}--\eqref{eqT2} in the form
(see \cite[Section 4]{Sidi:2019:SSI-P1})
\begin{align}
 \widehat{T}{}^{(0)}_n[f]&=\widetilde{T}_n[f]
          -\frac{\pi^2}{3}\,g'(t)\,h^{-1}+\frac{1}{6}\,g'''(t)\,h, \quad
         \label{eqT0p}\\
  \widehat{T}{}^{(1)}_n[f]&=(2\widetilde{T}_{2n}[f]-\widetilde{T}_n[f])
                      -\pi^2\,g'(t)\,h^{-1},\label{eqT1p} \\
 \widehat{T}{}^{(2)}_n[f]&=2(2\widetilde{T}_{2n}[f]-\widetilde{T}_n[f])
 -(2\widetilde{T}_{4n}[f]-\widetilde{T}_{2n}[f]). \label{eqT2p}
 \end{align}
 This suggests that we can unify and shorten the proofs for the three $\widehat{T}{}^{(s)}_n[f_m]$ since we only have to analyze $\widetilde{T}_n[f_m]$ in detail. We do this in Theorem \ref{thBmn} that follows.

\subsection{Analysis of $\widetilde{T}_n[f_m]$}

 \begin{theorem}\label{thBmn} $\widetilde{T}_n[f_m]$ satisfies
\be\label{eqpq}
\widetilde{T}_n[f_m]=\bigg(\mrm{i}\,\frac{T}{n}\,B_{m,n}\bigg)e_m(t),
 \quad m=0,\pm1,\pm2,\ldots,\ee
where $B_{m,n}$ has the following properties:
\begin{gather}
 B_{-m,n}=-B_{m,n}\ \forall m\quad \Rightarrow\quad B_{0,n}=0, \label{eqmm1}\\
B_{m,n}=\frac{2}{3}mn^2-2m^2n+\frac{4}{3}m^3,\quad m=1,\ldots n-1, \label{eqmm2}\\
B_{m,n}=\text{\em sgn}(m)B_{kn+r,n}=\text{\em sgn}(m)B_{r,n}\ \forall m;\quad k\geq0,\ r\in \{0,1,\ldots,n-1\},\label{eqmm3}\end{gather}
where $k$ and $r$ are unique integers for which $\big|m\big|=kn+r$.
\end{theorem}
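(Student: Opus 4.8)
The plan is to write $\widetilde{T}_n[f_m]$ explicitly as a discrete trigonometric sum, read off its symmetries directly, and then identify the sequence in $m$ through a second-order recursion — reusing the trigonometric identity \eqref{eq513} and mimicking the treatment of $A_m$ in Section~\ref{se3}. With $h=T/n$ and $\theta_j=\pi j/n$, the facts $K(t+jh)=\cos\theta_j/\sin^3\theta_j$ and $e_m(t+jh)=e_m(t)\,e^{\mrm{i}2m\theta_j}$ give
\[
\widetilde{T}_n[f_m]=\frac{T}{n}\,e_m(t)\,S_{m,n},\qquad
S_{m,n}=\sum^{n-1}_{j=1}\frac{\cos\theta_j}{\sin^3\theta_j}\,e^{\mrm{i}2m\theta_j}.
\]
Reindexing $j\mapsto n-j$ in $S_{m,n}$ (which fixes $\sin\theta_j$, flips the sign of $\cos\theta_j$, and sends $e^{\mrm{i}2m\theta_j}$ to $e^{-\mrm{i}2m\theta_j}$) shows $\overline{S_{m,n}}=-S_{m,n}$, so $S_{m,n}$ is purely imaginary; writing $S_{m,n}=\mrm{i}B_{m,n}$ with $B_{m,n}\in\mathbb{R}$ is \eqref{eqpq}. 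Since the coefficients $\cos\theta_j/\sin^3\theta_j$ are real, $S_{-m,n}=\overline{S_{m,n}}=-S_{m,n}$, i.e.\ $B_{-m,n}=-B_{m,n}$ and $B_{0,n}=0$, which is \eqref{eqmm1}; and because $e^{\mrm{i}2n\theta_j}=1$, we have $S_{m+n,n}=S_{m,n}$, so $B_{m,n}$ is $n$-periodic in $m$, and together with oddness this yields \eqref{eqmm3}. It then remains only to establish the polynomial formula \eqref{eqmm2} on $1\le m\le n-1$; taking imaginary parts, $B_{m,n}=\sum^{n-1}_{j=1}(\cos\theta_j/\sin^3\theta_j)\sin(2m\theta_j)$.

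For the closed form I would substitute $y=2\theta_j$ into \eqref{eq513}, so that it reads
\[
\frac{\cos\theta_j}{\sin^3\theta_j}\Big(\sin[2(m+1)\theta_j]-2\sin(2m\theta_j)+\sin[2(m-1)\theta_j]\Big)=-4\cot\theta_j\,\sin(2m\theta_j),
\]
and sum over $j=1,\dots,n-1$ to obtain the recursion $B_{m+1,n}-2B_{m,n}+B_{m-1,n}=-4\,D_{m,n}$, where $D_{m,n}=\sum^{n-1}_{j=1}\cot\theta_j\,\sin(2m\theta_j)$. I would evaluate $D_{m,n}$ via the Dirichlet-type identity $\sin(2m\theta)/\sin\theta=2\sum^{m}_{\ell=1}\cos[(2\ell-1)\theta]$, which gives $\cot\theta_j\sin(2m\theta_j)=1+\cos(2m\theta_j)+2\sum^{m-1}_{\ell=1}\cos(2\ell\theta_j)$; summing over $j$ and using $\sum^{n-1}_{j=1}\cos(2k\theta_j)=-1$ for $1\le k\le n-1$ yields $D_{m,n}=n-2m$ for $1\le m\le n-1$. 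Hence $B_{m+1,n}-2B_{m,n}+B_{m-1,n}=8m-4n$ on that range.

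The last step is to solve this inhomogeneous recursion. A particular solution is the cubic $\tfrac43 m^3-2nm^2$ (its second difference equals $8m-4n$), and the homogeneous part is spanned by $1$ and $m$, so $B_{m,n}=\tfrac43 m^3-2nm^2+\beta m+\alpha$ for $m=0,1,\dots,n$. The boundary values $B_{0,n}=0$ (noted above) and $B_{n,n}=\sum_{j}(\cos\theta_j/\sin^3\theta_j)\sin(2\pi j)=0$ force $\alpha=0$ and $\beta=\tfrac23 n^2$, which is precisely \eqref{eqmm2}.

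The main obstacle I anticipate is the clean evaluation of $D_{m,n}$ — in particular, keeping track of the fact that the closed form $D_{m,n}=n-2m$ is valid only for $1\le m\le n-1$ (its odd, $n$-periodic extension differs), which is exactly the range on which the recursion is invoked. Once that is settled, everything else is routine bookkeeping paralleling Section~\ref{se3}, and Theorems~\ref{thw2} and~\ref{thw3} then follow by inserting \eqref{eqpq} into the decompositions \eqref{eqT0p}--\eqref{eqT2p} together with \eqref{eqrty}.
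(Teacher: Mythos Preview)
Your argument is correct and follows the same overall architecture as the paper's proof: the symmetry $j\mapsto n-j$ to get \eqref{eqmm1}, $n$-periodicity in $m$ for \eqref{eqmm3}, and the second-order recursion obtained by summing \eqref{eq513} over $j$ to pin down \eqref{eqmm2}. Two points of genuine difference are worth flagging. First, your evaluation of the forcing term (your $D_{m,n}$, the paper's $C_{m,n}$) via the Dirichlet-kernel expansion $\sin(2m\theta)/\sin\theta=2\sum_{\ell=1}^{m}\cos[(2\ell-1)\theta]$ is a direct one-shot computation, whereas the paper introduces an auxiliary sequence and a telescoping step to reach the same $n-2m$. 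Second, and more substantively, you fix the two free constants in the cubic from the boundary values $B_{0,n}=0$ and $B_{n,n}=0$, both of which are trivially zero because $\sin(2m\theta_j)$ vanishes identically at $m=0$ and $m=n$; the paper instead computes $B_{1,n}$ explicitly, which requires the nontrivial identity $\sum_{j=1}^{n-1}\csc^2(j\pi/n)=(n^2-1)/3$, established there via $T_n''(1)/T_n'(1)$ for Chebyshev polynomials. Your route therefore bypasses that detour entirely and is the more elementary of the two, at the small cost of noting carefully (as you do) that the recursion with right-hand side $8m-4n$ is valid on $1\le m\le n-1$ and hence links $B_{0,n},\ldots,B_{n,n}$, so that the endpoints are legitimate boundary conditions.
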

\begin{proof}
We start by observing that, by \eqref{ttilde},
\be \widetilde{T}_n[f_m]=\mrm{i}\,\frac{T}{n}B_{m,n},\quad
B_{m,n}=-\mrm{i}\sum^{n-1}_{j=1}\frac{\cos(\tfrac{1}{2}y_j)}{\sin^3(\tfrac{1}{2}y_j)}\,e^{\mrm{i}my_j},
\quad y_j=\frac{2j\pi}{n},\quad j=1,2,\ldots.\label{eqpq3}\ee
Now
  $$ B_{m,n}=-\mrm{i}\sum^{n-1}_{j=1}\frac{\cos(\tfrac{1}{2}y_j)}{\sin^3(\tfrac{1}{2}y_j)}\,\cos(my_j)
 +\sum^{n-1}_{j=1}\frac{\cos(\tfrac{1}{2}y_j)}{\sin^3(\tfrac{1}{2}y_j)}\,\sin(my_j).$$
Because $y_{n-j}=2\pi-y_j$, $j=1,\ldots,n-1,$ we have
$$ \frac{\cos(\tfrac{1}{2}y_{n-j})}{\sin^3(\tfrac{1}{2}y_{n-j})}\,\cos(my_{n-j})=
-\frac{\cos(\tfrac{1}{2}y_j)}{\sin^3(\tfrac{1}{2}y_j)}\,\cos(my_j), \quad j=1,\ldots,n-1,$$
and since
$\sum^{n-1}_{j=1}w_{n-j}=\sum^{n-1}_{j=1}w_j$, we have
$$\sum^{n-1}_{j=1}\frac{\cos(\tfrac{1}{2}y_j)}{\sin^3(\tfrac{1}{2}y_j)}\,\cos(my_j)=0.$$
As a result,
\be\label{eqBmn}B_{m,n}=\sum^{n-1}_{j=1}\frac{\cos(\tfrac{1}{2}y_j)}{\sin^3(\tfrac{1}{2}y_j)}\,\sin(my_j)\quad
\Rightarrow\quad B_{0,n}=0,\quad B_{-m,n}=-B_{m,n},\ee
hence \eqref{eqmm1} is proved.
Therefore, it is sufficient to study $B_{m,n}$ only for positive $m$.

Next,  for every $m\geq 0$, there exist unique integers $k$ and $r$, $k\geq 0$ and  $0\leq r\leq n-1$, such that $m=nk+r$.
(Thus,  $k=0$ and $r=m$ for $0\leq m\leq n-1$, while $k=1$ and $r=0$ for $m=n$.)
By the fact that
 $$\sin[(kn+r)y_j]=\sin(2kj\pi+ry_j)=\sin(ry_j),\quad r=0,\ldots,n-1,$$
 we realize that
\be\label{eq5181} B_{m,n}=B_{kn+r,n}=B_{r,n}\quad \text{when $m\geq0$},\ee
which, upon combining with \eqref{eqBmn}, results in \eqref{eqmm3}.
Thus, we need to concern ourselves only with $1\leq m\leq n-1$ since $k=0$ and
$r=m$ in such a case, and this is what we do in the sequel.

We start by deriving a recursion relation for the $B_{m,n}$  analogous to that for the $A_m$ given in \eqref{eqLmn11}. Replacing $y$ in \eqref{eq513} by $y_j$ and summing over $j$, we obtain
\be\label{eq513c} B_{m+1,n}-2B_{m,n}+B_{m-1,n}=-4C_{m,n},\quad C_{m,n}=
\sum^{n-1}_{j=1}\cot(\tfrac{1}{2}y_j)\,\sin(my_j).\ee
To determine $C_{m,n}$, we proceed as follows: First,
\be\label{eq5194} \sin(my\pm \tfrac{1}{2}y)=\sin(my)\cos(\tfrac{1}{2}y)\pm \cos(my)\sin(\tfrac{1}{2}y).\ee
Dividing both sides of this identity by $\sin(\tfrac{1}{2}y)$, replacing $y$ by $y_j$, and
summing over $j$, we obtain
\be \label{eq5199} \sum^{n-1}_{j=1}\frac{\sin(my_j\pm \tfrac{1}{2}y_j)}
{\sin(\tfrac{1}{2}y_j)}=C_{m,n}\pm \sum^{n-1}_{j=1}\cos(my_j).\ee
Now,
$$ \sum^{n-1}_{j=1}\cos(my_j)=\text{Re}\sum^{n-1}_{j=1}e^{\mrm{i}my_j}=
\text{Re}\sum^{n-1}_{j=1}\big(e^{\mrm{i}2m\pi/n}\big)^j=-1,\quad m=1,\ldots,n-1,$$
since $e^{\mrm{i}2m\pi/n}\neq1$ for $m=1,\ldots,n-1.$
Upon also defining
$$ D_{k,n}=\sum^{n-1}_{j=1}\frac{\sin(ky_j- \tfrac{1}{2}y_j)}
{\sin( \tfrac{1}{2}y_j)},$$ \eqref{eq5199} gives the equalities
$$ D_{m,n}=C_{m,n}+1\quad\text{and}\quad D_{m+1,n}=C_{m,n}-1,\quad m=1,\ldots,n-1.$$
Eliminating $C_{m,n}$, we obtain
$$ D_{m+1,n}=D_{m,n}-2,\quad m=1,\ldots,n-1, $$
which, upon realizing that $D_{1,n}=n-1$, gives
$$ D_{m,n}=n-2m+1 \quad \Rightarrow \quad C_{m,n}=n-2m, \quad m=1,\ldots,n-1.$$
As a result, \eqref{eq513c} becomes
\be\label{eq512a} B_{m+1,n}-2B_{m,n}+B_{m-1,n}=8m-4n,\quad  m=1,\ldots,n-1.\ee
It is easy to see that the general solution of this recursion relation for $B_{m,n}$ is of the form
\be \label{eqBm1}B_{m,n}=\alpha + \beta m-2nm^2+\frac{4}{3}m^3,\quad m\geq 1,\ee
 $\alpha$ and $\beta$   being constants to be determined.
They can be obtained by invoking the values of $B_{1,n}$ and $B_{2,n}$.

We start with $B_{1,n}$. Letting $m=1$ in  \eqref{eqBmn} and  simplifying, we obtain
\be \label{En1} B_{1,n}
=2\sum^{n-1}_{j=1}\bigg(\frac{1}{\sin^2(\tfrac{1}{2}y_j)}-1\bigg)=2(L_n-n+1),\quad
L_n= \sum^{n-1}_{j=1}\frac{1}{\sin^2(\tfrac{1}{2}y_j)}.\ee
To determine $L_n$, we proceed as follows: We first express $L_n$ in the form
\be \label{En2} L_n=
\sum^{n-1}_{j=1}\frac{1}{1-\eta_j^2}=\frac{1}{2}\sum^{n-1}_{j=1}\bigg(
\frac{1}{1-\eta_j}+\frac{1}{1+\eta_j}\bigg)=\sum^{n-1}_{j=1}
\frac{1}{1-\eta_j},\quad \eta_j=\cos\bigg(\frac{j\pi}{n}\bigg).\ee
Here we have invoked  $\eta_{n-j}=-\eta_j$ and $\sum^{n-1}_{j=1}w_j=\sum^{n-1}_{j=1}w_{n-j}$.
Now, $\eta_1,\ldots,\eta_{n-1}$ are the points of extremum of the $n$th Chebyshev polynomial $T_n(z)$ in $(-1,1)$, hence the zeros of its derivative $T_n'(z)$.
Thus,
$$ \frac{T_n''(z)}{T_n'(z)}=\sum^{n-1}_{j=1}\frac{1}{z-\eta_j},$$ hence
\be \label{En3} L_n=\frac{T_n''(1)}{T_n'(1)}= \frac{n^2-1}{3}.\ee
(See, Rivlin \cite[p. 38]{Rivlin:1990:CP}, for example.)
Consequently,
$$ B_{1,n}=2\,\bigg(\frac{n^2-1}{3}-n+1\bigg).$$
As for $B_{2,n}$, letting $m=1$ in \eqref{eq512a} and recalling that $B_{0,n}=0$, we obtain
$$ B_{2,n}=2B_{1,n}+8-4n.$$
Substituting these values of $B_{1,n}$ and $B_{2,n}$ into \eqref{eqBm1}  (with $m=1$ and $m=2$ there), we obtain $\alpha=0$ and $\beta=2n^2/3$, hence \eqref{eqmm2}.
\end{proof}

\subsection{Completion of proofs}
With Theorem \ref{thBmn} available, we can now complete the proof of Theorem \ref{thw2}.
Since  $u(x)=e_m(x)$ when $f(x)=f_m(x)$, from \eqref{eqpqs} and \eqref{eqrty}, we have
$$g'(t)=\frac{T^3}{\pi^3}e_m'(t)=\mrm{i}\,2\frac{T^2}{\pi^2}m e_m(t)\quad\text{and}\quad
g'''(t)=\frac{T^3}{\pi^3}e_m'''(t)=-\mrm{i}\,8m^3e_m(t).$$
Substituting these in \eqref{eqT0p}--\eqref{eqT2p} and invoking also \eqref{eqpq}, we obtain
\eqref{eqTn8}--\eqref{eqTn10}. Finally, the expression given for $B_{m,n}$ in \eqref{eqTn7} is simply that in \eqref{eqmm2} proved in Theorem \ref{thBmn}.
This completes the proof of Theorem \ref{thw2}.

To complete  the proof of Theorem \ref{thw3}, we only need to verify  \eqref{eqTn8a}.
We can achieve this by substituting \eqref{eqmm2} in \eqref{eqTn8}--\eqref{eqTn10} and comparing with \eqref{eqIfm}. We leave the details to the reader.

\section{Convergence property of the $\widehat{T}{}^{(s)}_n[f]$}\label{se5}
\setcounter{equation}{0} \setcounter{theorem}{0}
It is well  known that the  trapezoidal rule $Q_n[f]$ in \eqref{eqtr1} converges exponentially in $n$ when applied to {\em regular} integrals $I[f]=\int^b_af(x)\,dx$
in case $f(z)$, as a function of the complex variable $z$,  is analytic in a strip of the $z$-plane containing the real axis and is  $(b-a)$-periodic   in this strip. The following theorem by Davis \cite{Davis:1959:NIP}, addresses this fully:

\begin{theorem}\label{thw5}
Let $f(z)$ be analytic and periodic with period $T=b-a$ in the infinite strip
$D_\sigma=\{z:\ \big|\text{\em Im}\, z\big|<\sigma\}$
 of the $z$-plane.  Then
\be\label{eq8} \big| Q_n[f]-I[f]\big|\leq T M(\tau)\frac{e^{-2n\pi\tau/T}}
{1-e^{-2n\pi\tau/T}}\quad \forall \tau\in(0,\sigma),\ee
{where}
\be\label{eq9} M(\tau)=\max_{x\in \mathbb{R}}\big|f(x+\mrm{i}\tau)\big|+\max_{x\in \mathbb{R}}\big|f(x-\mrm{i}\tau)\big|.\ee
\end{theorem}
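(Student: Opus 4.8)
The plan is the classical Poisson‑summation / aliasing argument. Since $f$ is analytic and $T$‑periodic throughout $D_\sigma$, it is in particular $C^\infty$ on $\mathbb R$, so it has a uniformly (indeed absolutely) convergent Fourier series $f(z)=\sum_{k=-\infty}^{\infty}\hat f_k\,e^{\mrm i 2\pi kz/T}$, with $\hat f_k=\frac1T\int_a^b f(x)\,e^{-\mrm i 2\pi kx/T}\,dx$; in particular $I[f]=T\hat f_0$. The first step is to compute the action of the trapezoidal rule on the exponentials $e_k(x)=e^{\mrm i 2\pi kx/T}$. Using periodicity to merge the two endpoint half‑weights, $Q_n[f]=h\sum_{j=0}^{n-1}f(a+jh)$ with $h=T/n$, so $Q_n[e_k]=h\,e_k(a)\sum_{j=0}^{n-1}e^{\mrm i 2\pi kj/n}$; the geometric sum equals $n$ when $n\mid k$ and $0$ otherwise. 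Comparing with $I[e_k]$ (which is $T$ for $k=0$ and $0$ otherwise) gives $Q_n[e_k]-I[e_k]=T\,e_k(a)$ when $k$ is a nonzero multiple of $n$, and $0$ in every other case.

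The second step is to sum: interchanging the Fourier series with the finite functional $Q_n$ (trivially valid) and with the integral $I$ (valid because the series converges uniformly on $[a,b]$) yields the aliasing identity $Q_n[f]-I[f]=T\sum_{l\neq 0}\hat f_{ln}\,e_{ln}(a)$. Since $|e_{ln}(a)|=1$, this gives $\bigl|Q_n[f]-I[f]\bigr|\le T\sum_{l\neq 0}\bigl|\hat f_{ln}\bigr|$, and everything is reduced to bounding the high‑frequency Fourier coefficients.

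The third and central step is the contour‑shift estimate for $\hat f_k$. Fix $\tau\in(0,\sigma)$. For $k>0$, apply Cauchy's theorem to $f(z)e^{-\mrm i 2\pi kz/T}$ on the rectangle with horizontal sides $[a,b]$ and $[a-\mrm i\tau,\,b-\mrm i\tau]$: analyticity in the closed sub‑strip makes the contour integral vanish, and the two vertical sides cancel because the integrand is $T$‑periodic, so $\hat f_k=\frac1T\int_a^b f(x-\mrm i\tau)\,e^{-\mrm i 2\pi k(x-\mrm i\tau)/T}\,dx$, whence $|\hat f_k|\le e^{-2\pi k\tau/T}\max_{x\in\mathbb R}|f(x-\mrm i\tau)|$. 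Shifting \emph{upward} instead treats $k<0$ and gives $|\hat f_k|\le e^{-2\pi|k|\tau/T}\max_{x\in\mathbb R}|f(x+\mrm i\tau)|$. Adding the $l$th and $(-l)$th terms produces $|\hat f_{ln}|+|\hat f_{-ln}|\le M(\tau)\,e^{-2\pi ln\tau/T}$, and summing the geometric series over $l\ge 1$ gives exactly $\bigl|Q_n[f]-I[f]\bigr|\le T M(\tau)\,e^{-2n\pi\tau/T}/(1-e^{-2n\pi\tau/T})$.

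I do not expect a serious obstacle here; the only points requiring care are bookkeeping ones — knowing a priori that the Fourier series converges uniformly on $[a,b]$ before interchanging it with $I[\cdot]$ (immediate from $f\in C^\infty$, or bootstrappable from the coefficient estimate itself), and noting that the contour shift uses boundedness of $f$ on the lines $\text{Im}\,z=\pm\tau$ with $\tau$ strictly less than $\sigma$, which is precisely why the inequality is asserted for $\tau\in(0,\sigma)$ rather than at $\tau=\sigma$.
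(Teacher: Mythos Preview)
Your argument is correct and is precisely the classical Poisson-summation/aliasing proof of this result. Note, however, that the paper does not supply its own proof of this theorem: it is stated as a known result and attributed to Davis~\cite{Davis:1959:NIP}, so there is no in-paper proof to compare against. What you have written is essentially Davis's original argument (Fourier expansion, the discrete orthogonality $Q_n[e_k]=T\,e_k(a)\,\mathbf{1}_{n\mid k}$, and the contour-shift bound $|\hat f_k|\le e^{-2\pi|k|\tau/T}\max_x|f(x\mp\mrm{i}\tau)|$), carried through cleanly; the care you take with uniform convergence before interchanging $I[\cdot]$ with the Fourier series, and with keeping $\tau$ strictly inside $(0,\sigma)$, is exactly what is needed.
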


We showed  in Sidi and Israeli \cite[Theorem 9]{Sidi:1988:QMP} and in  Sidi \cite[Theorems 6.1 and 6.2]{Sidi:2013:CNQ} that the numerical quadrature formulas developed in these papers  for periodic  {\em Cauchy Principal Value} integrals and  {\em hypersingular} integrals, when applied to $\intbar^b_a \cot\frac{\pi(x-t)}{T} u(x)\,dx$ and
 $\intBar^b_a \csc^2\frac{\pi(x-t)}{T} u(x)\,dx$, respectively,   also enjoy similar convergence properties when $f(z)$ is $T$-periodic and has  poles of order one and two, respectively, at the points $t+kT$, $k=0,\pm1,\pm2,\ldots,$ and is analytic in a strip of the $z$-plane containing the real axis.

Here we show that each of the numerical quadrature formulas  $\widehat{T}{}^{(s)}_n[f]$ given in \eqref{eqT0}--\eqref{eqT2}  for  supersingular integrals $I[f]$ with $T$-periodic $f(z)$
enjoys  similar convergence properties,
 as described in Theorem \ref{thw6}. The proof  of this theorem is provided in Section~\ref{se6}.

\begin{theorem}\label{thw6}
Let the function $u(z)$ be analytic and periodic with period $T=b-a$ in the infinite strip $D_\sigma=\{z:\ \big|\text{\em Im}\, z\big|<\sigma\}$
 of the $z$-plane, and let
 $$ f(x)=\frac{\cos\frac{\pi(x-t)}{T}}{\sin^3\frac{\pi(x-t)}{T}}\,u(x)\quad\text{and}\quad I[f]=\intBar^b_af(x)\,dx.$$
Define
 $E^{(s)}_n[f]=\widehat{T}{}^{(s)}_n[f]-I[f]$, $s=0,1,2.$ Then
   \begin{align}
  \big|E^{(0)}_n[f]\big|&\leq T M(\tau)\phi_n(\tau)\quad \forall \tau\in(0,\sigma), \label{eqEEn}\\
\big|E^{(1)}_n[f]\big|&\leq  T M(\tau)[\phi_n(\tau)+2\phi_{2n}(\tau)]\quad \forall \tau\in(0,\sigma), \label{eqT1qq}  \\
\big|E^{(2)}_n[f]\big|&\leq  T M(\tau)[2\phi_n(\tau)+5\phi_{2n}(\tau)+2\phi_{4n}(\tau)] \quad \forall \tau\in(0,\sigma).\label{eqT2qq}
 \end{align}
 where
\be M(\tau)=\max_{x\in \mathbb{R}}\big|F_1(x+\mrm{i}\tau)\big|+\max_{x\in \mathbb{R}}\big|F_1(x-\mrm{i}\tau)\big|,\quad
 \phi_n(\tau)=\frac{e^{-2n\pi\tau/T}}
{1-e^{-2n\pi\tau/T}}.
 \ee
Here $F_1(z)$ is $T$-periodic and analytic in the strip $D_\sigma$ and is given
as
 $$F_1(z)=K(z)\bigg[u(z)-u(t)-\frac{T}{\pi}u'(t)\tan\frac{\pi(z-t)}{T}
-\frac{T^2}{2\pi^2}u''(t)\sin^2\frac{\pi(z-t)}{T}\bigg].$$
\end{theorem}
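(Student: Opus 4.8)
The plan is to split $f=Ku$ into an analytic $T$-periodic remainder plus an explicit fixed combination of singular kernels on which the $\widehat{T}{}^{(s)}_n$ are already known to act exactly, and then to apply Theorem~\ref{thw5} to the remainder. Throughout I would use that $\widehat{T}{}^{(s)}_n$ (via $\varphi\mapsto(x-t)^3\varphi$ and the finite sums) and the HFP $I[\cdot]$ are linear functionals of $\varphi$ on the class of functions whose product with $(x-t)^3$ is analytic near $x=t$; this class contains $f$, $K$, $\csc^2\frac{\pi(\cdot-t)}{T}$, $\cot\frac{\pi(\cdot-t)}{T}$, and $F_1$.

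First I would set up the decomposition. Using the elementary identities $K(z)\tan\frac{\pi(z-t)}{T}=\csc^2\frac{\pi(z-t)}{T}$ and $K(z)\sin^2\frac{\pi(z-t)}{T}=\cot\frac{\pi(z-t)}{T}$, one rewrites $f$ as
$$ f(z)=F_1(z)+u(t)\,K(z)+\frac{T}{\pi}\,u'(t)\,\csc^2\tfrac{\pi(z-t)}{T}+\frac{T^2}{2\pi^2}\,u''(t)\,\cot\tfrac{\pi(z-t)}{T}, $$
with $F_1$ exactly the function in the statement. I would then check that $F_1$ is $T$-periodic and analytic in $D_\sigma$: every summand on the right, and $f$ itself, is $T$-periodic with poles inside $D_\sigma$ only at the points $t+kT$, and the Taylor expansions already carried out in Section~\ref{se4} — namely $\cos z/\sin^3 z=z^{-3}[1+O(z^4)]$, $\frac{T}{\pi}\tan\frac{\pi(z-t)}{T}=(z-t)+O((z-t)^3)$, $\frac{T^2}{2\pi^2}\sin^2\frac{\pi(z-t)}{T}=\tfrac12(z-t)^2+O((z-t)^4)$ — show that the bracket defining $F_1(z)=K(z)[\,\cdot\,]$ vanishes to order three at $z=t$, so $F_1$ is regular there, hence by periodicity at every $t+kT$.

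Next, by linearity, with $E^{(s)}_n[\varphi]=\widehat{T}{}^{(s)}_n[\varphi]-I[\varphi]$,
$$ E^{(s)}_n[f]=E^{(s)}_n[F_1]+u(t)\,E^{(s)}_n[K]+\tfrac{T}{\pi}u'(t)\,E^{(s)}_n\!\big[\csc^2\tfrac{\pi(\cdot-t)}{T}\big]+\tfrac{T^2}{2\pi^2}u''(t)\,E^{(s)}_n\!\big[\cot\tfrac{\pi(\cdot-t)}{T}\big], $$
and I would show the last three terms vanish. Since $K=Ke_0$ and $\cot\frac{\pi(x-t)}{T}=K(x)\sin^2\frac{\pi(x-t)}{T}$ with $\sin^2\frac{\pi(x-t)}{T}=\tfrac12 e_0(x)-\tfrac14 e^{-2\pi\mrm{i}t/T}e_1(x)-\tfrac14 e^{2\pi\mrm{i}t/T}e_{-1}(x)$ a $T$-periodic trigonometric polynomial of degree $1$, Theorem~\ref{thw3} gives $E^{(s)}_n[K]=0$ and $E^{(s)}_n[\cot\frac{\pi(\cdot-t)}{T}]=0$ for $n\ge2$. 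For $\csc^2\frac{\pi(\cdot-t)}{T}=K(x)\tan\frac{\pi(x-t)}{T}$, which is not $K$ times a trigonometric polynomial, there is no such shortcut and one must compute directly: $\widetilde{T}_n[\csc^2\frac{\pi(\cdot-t)}{T}]=\frac{T}{n}L_n=\frac{T(n^2-1)}{3n}$ by \eqref{En3}; the Laurent expansion $\csc^2 w=w^{-2}+\tfrac13+O(w^2)$ gives, for $g(x)=(x-t)^3\csc^2\frac{\pi(x-t)}{T}$, that $g'(t)=(T/\pi)^2$ and $g'''(t)=2$; and $I[\csc^2\frac{\pi(\cdot-t)}{T}]=\frac{T}{2\pi}\intBar^{\pi}_{-\pi}\csc^2(\tfrac12 y)\,dy=0$ after the affine substitution used in Section~\ref{se3}. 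Substituting these into \eqref{eqT0p}--\eqref{eqT2p} makes each $E^{(s)}_n[\csc^2\frac{\pi(\cdot-t)}{T}]$ vanish identically, so $E^{(s)}_n[f]=E^{(s)}_n[F_1]$. This direct $\csc^2$ evaluation is the one ingredient not handed to us by Theorems~\ref{thw1}--\ref{thw3}, and it is where I expect the main (though modest) work to lie.

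Finally I would treat the analytic remainder. Because $F_1$ is analytic at $t$, $g_1(x)=(x-t)^3F_1(x)$ satisfies $g_1'(t)=0$, $g_1'''(t)=6F_1(t)$, and $I[F_1]=\int_a^b F_1(x)\,dx$. Writing $h_N=T/N$ and letting $Q_N$ denote the periodic trapezoidal rule on $N$ panels over $[t,t+T]$, one has $\widetilde{T}_N[F_1]=Q_N[F_1]-h_N F_1(t)$; substituting this together with $g_1'(t),g_1'''(t)$ into \eqref{eqT0p}--\eqref{eqT2p}, the terms proportional to $F_1(t)$ cancel and one is left with
$$ \widehat{T}{}^{(0)}_n[F_1]=Q_n[F_1],\qquad \widehat{T}{}^{(1)}_n[F_1]=2Q_{2n}[F_1]-Q_n[F_1],\qquad \widehat{T}{}^{(2)}_n[F_1]=5Q_{2n}[F_1]-2Q_n[F_1]-2Q_{4n}[F_1]. $$
In each case the coefficients sum to $1$, so $E^{(s)}_n[F_1]$ is the corresponding linear combination of the trapezoidal errors $Q_N[F_1]-I[F_1]$ for $N\in\{n,2n,4n\}$. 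Applying Theorem~\ref{thw5} with $[a,b]$ replaced by $[t,t+T]$ (legitimate since $F_1$ is $T$-periodic and analytic in $D_\sigma$), which yields $|Q_N[F_1]-I[F_1]|\leq TM(\tau)\phi_N(\tau)$ with $M(\tau)$ as in the statement, together with the triangle inequality, gives exactly the bounds \eqref{eqEEn}--\eqref{eqT2qq}.
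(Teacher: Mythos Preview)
Your proposal is correct and follows essentially the same route as the paper: you make the identical splitting $f=F_1+F_2$ with the same $F_1$, kill the error on the three singular pieces of $F_2$ using Theorems~\ref{thw1}--\ref{thw3} together with the direct $L_n$ computation for the $\csc^2$ term, reduce $\widehat{T}{}^{(s)}_n[F_1]$ to linear combinations of trapezoidal rules $Q_N[F_1]$, and then invoke Theorem~\ref{thw5}. The only organisational difference is that the paper first records the identities $\widehat{T}{}^{(1)}_n=2\widehat{T}{}^{(0)}_{2n}-\widehat{T}{}^{(0)}_n$ and $\widehat{T}{}^{(2)}_n=-2\widehat{T}{}^{(0)}_{4n}+5\widehat{T}{}^{(0)}_{2n}-2\widehat{T}{}^{(0)}_n$, so it only needs to verify the $s=0$ case on $F_2$ and on $F_1$, whereas you check all three $s$ in parallel; the computations are the same either way.
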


\noindent{\bf Remark:} It is easy to see that, for all practical purposes, all three errors $E^{(s)}_n[f]$ are $O(e^{-2n\pi\sigma/T})$ as $n\to\infty$.
Of course, this improves the convergence result in \eqref{eq3} significantly for the supersingular integrals considered  here.

\section{Proof of Theorem \ref{thw6}} \label{se6}
\setcounter{equation}{0} \setcounter{theorem}{0}

We start by observing that, with $\widetilde{T}_n[f]$ as in \eqref{ttilde} and $\widehat{T}{}^{(0)}_n[f]$ as in \eqref{eqT0p}, we can reexpress  $\widehat{T}{}^{(1)}_n[f]$  in \eqref{eqT1p} and $\widehat{T}{}^{(2)}_n[f]$ in \eqref{eqT2p} as follows:
\begin{align}
\widehat{T}{}^{(1)}_n[f]&=2\widehat{T}{}^{(0)}_{2n}[f]-\widehat{T}{}^{(0)}_n[f],                  \label{eqT1q}  \\
 \widehat{T}{}^{(2)}_n[f]&= -2\widehat{T}{}^{(0)}_{4n}+5\widehat{T}{}^{(0)}_{2n}[f]-2\widehat{T}{}^{(0)}_n[f].
  \label{eqT2q}
 \end{align}
As a result, we also have
\begin{align}
E^{(1)}_n[f]&=2E^{(0)}_{2n}[f]-E^{(0)}_n[f],                  \label{eqT1qr}  \\
 E^{(2)}_n[f]&= -2E^{(0)}_{4n}[f]+5E^{(0)}_{2n}[f]-2E^{(0)}_n[f].  \label{eqT2qr}
 \end{align}
Clearly, this will help us  unify the treatments of  all three
$\widehat{T}{}^{(s)}_n[f]$, once we treat $\widehat{T}{}^{(0)}_n[f]$.

Next, following Yang \cite{Yang:2013:UAS}, we let
$$ u(z)=U_1(z)+U_2(z);\quad U_2(z)=u(t)+\frac{T}{\pi}u'(t)\tan\frac{\pi(z-t)}{T}
+\frac{T^2}{2\pi^2}u''(t)\sin^2\frac{\pi(z-t)}{T},$$
and $$f(z)=F_1(z)+F_2(z);\quad F_1(z)=K(z)\,U_1(z),\quad
 F_2(z)=K(z)\,U_2(z).$$
 Therefore,
 $$ I[f]=I[F_1]+I[F_2]\quad\text{and}\quad
  \widehat{T}{}^{(0)}_n[f]= \widehat{T}{}^{(0)}_n[F_1]+\widehat{T}{}^{(0)}_n[F_2].$$
Since $U_1(z)$ and $U_2(z)$ are both
$T$-periodic, so are $F_1(z)$ and $F_2(z)$.
 Expanding $U_2(z)$ about $z=t$ in a Taylor series, it is easy to verify that
 \be\label{eqUU}U_1(t)=U_1'(t)=U_1''(t)=0,\quad U_1'''(t)=u'''(t)-\frac{2\pi^2}{T^2}u'(t)\quad\Rightarrow\quad F_1(t)=\frac{T^3}{6\pi^3}U_1'''(t),\ee  which implies that $F_1(z)$ has no singularities in the strip $D_\sigma$ and that $I[F_1]$ is a regular integral, to which Theorem \ref{thw5} applies.

Let us now study $I[F_2]$ and $\widehat{T}{}^{(0)}_n[F_2]$.
We have
$$  I[F_2]=u(t)I_1+\frac{T}{\pi}u'(t)I_2+\frac{T^2}{2\pi^2}u''(t)I_3,$$
where, by Theorems \ref{thw1} and \ref{thw2},
$$I_1=I[Ke_0]=0=\widehat{T}{}^{(0)}_n[Ke_0]$$ and
since $\sin^2\tfrac{\pi(x-t)}{T}=\frac{1}{4}[2e_0(x)-e_2(x)e_{-2}(t)-e_{-2}(x)e_2(t)],$
$$ I_3=I[K\sin^2\tfrac{\pi(\cdot-t)}{T}]=0=
\widehat{T}{}^{(0)}_n[K\sin^2\tfrac{\pi(\cdot-t)}{T}] \quad \forall\ n\geq 3.$$
Next, it is known that
$$ I_2=I[K\tan\tfrac{\pi(\cdot-t)}{T}]=\intBar^b_a\frac{1} {\sin^2\frac{\pi(x-t)}{T}}\,dx=0.$$
As for $\widehat{T}{}^{(0)}_n[K\tan\tfrac{\pi(\cdot-t)}{T}]$, by
 \eqref{eqrty}--\eqref{eqT0p} and \eqref{En1}--\eqref{En3}, and by the fact that
 $\tan z=z+\tfrac{1}{3}z^3+O(z^5)$,
$$ \widetilde{T}{}^{(0)}_n[K\tan\tfrac{\pi(\cdot-t)}{T}]=\frac{T}{n}L_n\quad\Rightarrow\quad
\widehat{T}{}^{(0)}_n[K\tan\tfrac{\pi(\cdot-t)}{T}]=0.$$
We have thus shown that $I[F_2]=0=\widehat{T}{}^{(0)}_n[F_2].$
We conclude that $I[f]=I[F_1]$ and  $\widetilde{T}{}^{(0)}_n[f]=\widetilde{T}{}^{(0)}_n[F_1]$.

We now wish to show that
$\widehat{T}{}^{(0)}_n[f]=Q_n[F_1]$, where $Q_n[F_1]$, the trapezoidal rule approximation for $I[F_1]$, is given as
$$Q_n[F_1] =h\sum^{n-1}_{j=0}F_1(t+jh)=\widetilde{T}{}^{(0)}_n[F_1]+hF_1(t)$$
since $F_1(z)$ is $T$-periodic.
Therefore,  by \eqref{eqrty}--\eqref{eqT0p} and \eqref{eqUU},
$$ \widehat{T}{}^{(0)}_n[F_1]=Q_n[F_1]-hF_1(t)+\frac{T^3}{\pi^3}
\bigg(-\frac{\pi^2}{3}U_1'(t)h^{-1}+\frac{1}{6}U_1'''(t)h\bigg)=Q_n[F_1].$$

Combining everything, we have shown that $\widehat{T}{}^{(0)}_n[f]-I[f]=
Q_n[F_1]-I[F_1].$
We now complete the proof of \eqref{eqEEn} by applying Theorem \ref{thw5} to $I[F_1].$
The proofs of \eqref{eqT1qq} and \eqref{eqT2qq} are immediate.


\begin{thebibliography}{1}

\bibitem{Davis:1959:NIP}
P.J. Davis.
\newblock On the numerical integration of periodic analytic functions.
\newblock In R.E. Langer, editor, {\em On Numerical Approximation}, pages
  45--59, Madison, 1959. University of Wisconsin Press.

\bibitem{Gradshteyn:2007:TIS}
I.S. Gradshteyn and I.M. Ryzhik.
\newblock {\em {Table of Integrals, Series, and Products}}.
\newblock Academic Press, New York, 2007.
\newblock Seventh edition.

\bibitem{Rivlin:1990:CP}
T.J. Rivlin.
\newblock {\em {Chebyshev Poynomials: From Approximation Theory to Algebra and
  Number Theory}}.
\newblock Wiley, New York, second edition, 1990.

\bibitem{Sidi:2012:EME-P1}
A.~Sidi.
\newblock {Euler--Maclaurin} expansions for integrals with arbitrary algebraic
  endpoint singularities.
\newblock {\em Math. Comp.}, 81:2159--2173, 2012.

\bibitem{Sidi:2013:CNQ}
A.~Sidi.
\newblock Compact numerical quadrature formulas for hypersingular integrals and
  integral equations.
\newblock {\em J. Sci. Comput.}, 54:145--176, 2013.

\bibitem{Sidi:2019:SSI-P1}
A.~Sidi.
\newblock Unified compact numerical quadrature formulas for {Hadamard} finite
  parts of singular integrals of periodic functions.
\newblock Technical report, Computer Science Dept., Technion--Israel Institute
  of Technology, 2019.

\bibitem{Sidi:1988:QMP}
A.~Sidi and M.~Israeli.
\newblock Quadrature methods for periodic singular and weakly singular
  {Fredholm} integral equations.
\newblock {\em J. Sci. Comput.}, 3:201--231, 1988.
\newblock Originally appeared as Technical Report No. 384, Computer Science
  Dept., Technion--Israel Institute of Technology, (1985), and also as ICASE
  Report No. 86-50 (1986).

\bibitem{Yang:2013:UAS}
Chaoxia Yang.
\newblock A unified approach with spectral convergence for evaluation of
  hypersingular and supersingular integrals with a periodic kernel.
\newblock {\em J. Comp. Appl. Math.}, 239:322--332, 2013.

\end{thebibliography}

\end{document}